\newcommand{\lr}[1]{\lbrace #1 \rbrace}
\newtheorem{defi}{Definition}
\newtheorem{thm}{Theorem}
\newtheorem{ex}{Example}
\begin{document}

\thispagestyle{empty}  
\setlength{\oddsidemargin}{1cm} 
\setlength{\evensidemargin}{1cm}
\setlength{\topmargin}{2cm}

\begin{center}
{ \Large {\bf {Association schemes on triples over few vertices   }}}\\
\vspace{.2in}
{\sc Jose Maria P. Balmaceda}\\

{\small
Institute of Mathematics\\
University of the Philippines Diliman, 1101 Diliman, Quezon City, Philippines\\

jpbalmaceda@up.edu.ph\\
\bigskip

{\sc Dom Vito A. Briones}\\

{\small
Institute of Mathematics\\
University of the Philippines Diliman, 1101 Diliman, Quezon City, Philippines\\
dabriones@up.edu.ph}\\
}

\end{center}

\medskip

\begin{abstract}

In this paper, we obtain classification results for higher-dimensional analogues of classical association schemes called association schemes on triples (ASTs). We present an algorithm that enumerates all ASTs on a fixed number of vertices whose nontrivial relations are invariant under the action of some group. Applying this algorithm to three, four, and five vertices along with appropriate group actions yields the unique AST over three vertices, the unique symmetric ASTs over four or five vertices, the unique AST over four vertices with two nontrivial relations, and the unique nontrivial circulant AST over five vertices.

\medskip
\noindent
{\bf Keywords:} association scheme on triples, algebraic combinatorics, classification

\noindent
{\bf MSC 2020:} 05E30
\end{abstract}
\bigskip

\section{Introduction}
Initially introduced by Bose and Shimamoto for designing statistical experiments \cite{Bose1952,bannai_algebraic_1984}, classical association schemes have since found widespread applications in various algebraic-combinatorial structures. By unifying seemingly different mathematical objects under a single framework, such applications include permutation groups \cite{wielandt_finite_2015,cameron_permutation_1999,higman_coherent_1975,evdokimov_permutation_2009}, discrete geometries, graphs and directed graphs \cite{higman_coherent_1975,bannai_current_1986,bannai_algebraic_1984,godsil_algebraic_1993,godsil_algebraic_2001,brouwer_distance-regular_1989,efimov_distance-regular_2020,bang_geometric_2014,haemers_strongly_2010,vieira_generalized_2015,delsarte_association_1998,bannai_current_1986}, coding theory and designs \cite{delsarte_association_1998,bailey2004association}, certain types of algebras \cite{ponomarenko_preface_2009,hanaki_terwilliger_2011,blau_table_2009,xu_structure_2011,muzychuk_schur_2009}, and generalizations of groups and their representations \cite{zieschang_trends_2009,zieschang1995homogeneous,zieschang2006algebraic,zieschang_theory_2005,rassy_basic_1998}.

In its relation form, a classical association scheme of order $m$ on a nonempty set $\Omega$ is a partition $X=\lr{R_i}_{i=0}^m$ of $\Omega \times \Omega$ that satisfies certain symmetry requirements. These requirements are robust and flexible enough to equip classical association schemes with algebraic and combinatorial properties capable of describing the variety of mathematical structures mentioned prior. For instance, the linear span of the adjacency matrices of these relations is a Jacobson semisimple associative binary algebra that satisfies certain duality properties \cite{bannai_algebraic_1984}. Restrictions on the parameters of the adjacency algebra and its dual algebra can then be used for the classification theorems of certain graphs \cite{bannai_algebraic_1984}. With these applications of classical association schemes in mind, Mesner and Bhattacharya created a higher-dimensional analogue of association schemes called association schemes on triples or ASTs \cite{mesner_association_1990}. In ASTs, the relations and resulting adjacency algebras are ternary instead of binary. In the same paper, some basic properties of ASTs were observed; in particular, close relationships with ternary algebras, 2-designs, two-graphs, and two-transitive permutation groups were discerned. However, the study of ASTs is still in its infancy, as many structural properties of ASTs are still unknown. For example, it is not yet known if there is an analogue for the desirable semisimplicity and duality properties of classical association schemes in ASTs. However, \cite{mesner_ternary_1994} introduced ``identity pairs'' and ``inverse pairs'' as analogues of the usual multiplicative identity and multiplicative inverses in binary algebras in hopes of providing tools that may someday illuminate the ternary algebra structure of ASTs. In addition, \cite{Zealand2021} introduced another family of ASTs called circulant ASTs whose nontrivial relations are invariant under a transitive cyclic subgroup of the symmetric group. 

Motivated to aid in the study of ASTs by classifying examples on a small number of vertices, we mirror the classification of classical association schemes with few vertices up to isomorphism. Such classifications of classical association schemes are found in works such as \cite{Nomiyama1995,HIRASAKI1997,Hirasaka1996,Miyamoto1998,Hanaki1998,HANAKI2000, hanaki1999classification,See1998,Hanaki2003}. We approach the classification problem for ASTs by providing an algorithm which outputs the ASTs $X=\lr{R_i}_{i=0}^m$ of order $m$ over $n$ vertices such that the nontrivial relations of the ASTs are invariant under the action of some group. Applying this algorithm to $n\in \lr{3,4,5}$ and the appropriate groups, we obtain the unique AST over three vertices, the unique symmetric ASTs over $n\in \lr{4,5}$ vertices, the unique AST over four vertices with two nontrivial relations, and the unique nontrivial circulant AST over five vertices.

%Classification work was done for example in ...,..., and ..., wherein they determined the number of isomorphism classes of ASTs and their algebras 

%In 1990, Mesner and Bhattacharya introduced association schemes on triples.

%We provide an algorithm for this. Applying this to whatever yields something.

\section{Preliminaries}

In this section, we introduce association schemes on triples which are higher-dimensional analogues of classical association schemes. Due to their relevance in our examples, we also define symmetric ASTs, circulant ASTs, and isomorphism of ASTs. Many of the definitions, notations, and theorems we use are from \cite{mesner_association_1990} and \cite{Zealand2021}. 

We begin by defining order $m$ association schemes on triples over $n$ vertices.

\begin{defi}\label{def_ast}
Let $\Omega$ be a finite set of cardinality $n\geq 3$. An association scheme on triples (AST) of order $m\geq4$ on $\Omega$ is a partition $X=\lr{R_i}_{i=0}^m$ of $\Omega \times \Omega \times \Omega$ that satisfies the following properties.

\begin{enumerate}
    \item For any $i\in \lr{0,\ldots,m}$ there exists a constant $n_i^{(3)}\in \mathbb{N} \cup \lr{0}$ such that \[\vert \lr{z\in \Omega : (x,y,z)\in R_i} \vert=n_i^{(3)},\] for any pair of distinct elements $x,y\in \Omega$.
    \item For any $i,j,k,l \in \lr{0,\ldots,m}$, there exists a constant $p_{ijk}^l \in \mathbb{N} \cup \lr{0}$ such that \[\vert \lr{w:(w,y,z)\in R_i,\; (x,w,z)\in R_j,\; \text{and}\; (x,y,w)\in R_k} \vert = p_{ijk}^l ,\] for any $(x,y,z)\in R_l$.
    \item For any $i\in \lr{0,\ldots,m}$ and $\sigma\in S_3$, there exists a $j\in \lr{0,\ldots,m}$ such that \[R_i^{\sigma}:=\lr{(x_{\sigma(1)},x_{\sigma(2)},x_{\sigma(3)}):(x_1,x_2,x_3)\in R_i} = R_j.\]
    \item The first four relations are given by the following.
    \begin{align*}
        R_0 &= \lr{(x,x,x):x\in \Omega},\\
        R_1 &= \lr{(y,x,x):x,y\in \Omega,\; x\neq y},\\
        R_2 &= \lr{(x,y,x):x,y\in \Omega,\; x\neq y},\\
        R_3 &= \lr{(x,x,y):x,y\in \Omega,\; x\neq y}.
    \end{align*}
\end{enumerate}
\end{defi}
The relations $R_0,\ldots,R_3$ are the trivial relations and the remaining relations are the nontrivial relations. %If $p_{ijk}^l=p_{i'j'k'}^l$ for all permutations $(i',j',k')$ of $(i,j,k)$, we say that the AST $X$ is commutative. 
For example, analogous to how transitive groups yield classical association schemes, two-transitive groups yield ASTs \cite{mesner_association_1990}. Indeed, if $G$ is a two-transitive group acting on a set $\Omega$ of $n\geq 4$ elements, the orbits of $G$ in its natural action on $\Omega \times \Omega \times \Omega$ form an AST.

Paralleling the definition of symmetric classical association schemes, an AST $X=\lr{R_i}_{i=0}^m$ is symmetric when $R_i^\sigma=R_i$ for all nontrivial relations $R_i\in X$ and all $\sigma \in S_3$. In other words, each nontrivial relation is invariant under the action of $S_3$ by coordinate permutation. The intersection numbers $p_{ijk}^l$ of these ASTs were studied in \cite{mesner_association_1990}, along with some relationships of symmetric ASTs with partial 3-designs. It was also shown in the same paper that any nontrivial relation of an AST that is invariant under coordinate permutation yields a family of 2-designs. Moreover, they showed that 2-designs yield order $5$ ASTs and that a converse exists for this construction. 

If we instead consider ASTs on $\Omega$ where all nontrivial relations are invariant under a common transitive cyclic subgroup of $S_\Omega$, we obtain circulant ASTs \cite{Zealand2021}; that is, an AST $X=\lr{R_i}_{i=0}^m$ is circulant if there exists a transitive cyclic subgroup $G\leq S_\Omega$ such that for any $i\in \lr{0,\ldots,m}$, we have \[\lr{(gx,gy,gz):g\in G, \; (x,y,z)\in R_i}\subseteq R_i.\] Such ASTs were introduced and studied in \cite{Zealand2021}, where it was shown that circulant ASTs arise in correspondence with so-called AST-regular partitions of a certain subset of $\Omega \times \Omega$. In fact, they showed that in circulant ASTs, each nontrivial relation is the disjoint union of so-called ``thin'' circulant relations. For example, the ASTs obtained from the two-transitive action of $AGL(1,p)$ on the Galois Field $GF(p)$, where $p$ is prime, is circulant. This is due to the subgroup of translations in $AGL(1,p)$, which is cyclic and which acts transitively on $GF(p)$.

To close this section, we provide an analogue of combinatorial isomorphisms for classical association schemes \cite{Nomiyama1995} to association schemes on triples. To do so, let \[\sigma(R)=\lr{(\sigma(x),\sigma(y),\sigma(z)):(x,y,z)\in R}\] for each $R\subseteq \Omega \times \Omega \times \Omega$ and $\sigma \in S_\Omega$.

\begin{defi}\label{def_iso}
Let $X=\lr{R_i}_{i=0}^m$ and $Y=\lr{S_i}_{i=0}^m$ be two order $m$ ASTs over a set $\Omega$ of $n\geq 3$ vertices.  We say that $X$ and $Y$ are isomorphic ASTs if there exists $\sigma \in S_\Omega$ such that \[\lr{\sigma(R_i)}_{i=0}^m=\lr{S_i}_{i=0}^m,\] where the equality is on unordered sets.
\end{defi}

In other words, two ASTs $X$ and $Y$ on $\Omega$ are isomorphic if relabelling the elements of $\Omega$ using $\sigma$ and then rearranging the $\sigma(R_i)$ yield the relations $\lr{S_i}_{i=0}^m$ of $Y$.
For computations in the algorithm later on, we restate this isomorphism condition in terms of a group action by the symmetric group $S_\Omega$ on the set of partitions of $\Omega\times \Omega \times \Omega$. Indeed, $S_\Omega$ acts on the set of all unordered partitions of $\Omega \times \Omega \times \Omega$ by 
\begin{equation}\tag{$\dagger$}\label{eq_iso}
  \sigma (\lr{R_i}_{i=0}^m) = \lr{\sigma(R_i)}_{i=0}^m .
\end{equation} It follows that two ASTs $X$ and $Y$ are isomorphic if and only if $X$ and $Y$ belong to the same orbit under this action of $S_\Omega$.

%For example, let 

\section{Algorithm for determining the number of isomorphism classes of ASTs}

In this section, we present an algorithm for determining the number of isomorphism classes of ASTs on a fixed number of vertices such that each relation is invariant under some group action on $\Omega \times \Omega \times \Omega$. We provide the algorithm below and prove its validity. 

\begin{thm}\label{thm_algo}
Let $m\geq 4$, $\Omega$ be a set of size $n\geq 3$, and $G$ be a group acting on $\Omega \times \Omega \times \Omega$.  

\begin{enumerate}
    \item Let $\lr{R_i}_{i=0}^3$ be the trivial relations, $\mathbb{O}$ be the set of orbits of $G$ on $\Omega \times \Omega \times \Omega\setminus \bigcup_{i=0}^3{R_i}$, and $\mathbb{P}$ be the set of size $m-3$ (unordered) partitions of $\mathbb{O}$.
    \item Let $P \in \mathbb{P}$ and $U$ be an element of $P$. Define $A_U=\lr{\bigcup_{O \in U} O }$. Define $\hat{P}$ to be the partition of $\Omega\times \Omega \times \Omega$ given by \[\hat{P}=\lr{R_0,R_1,R_2,R_3}\cup \lr{A_U:U\in P}.\] Further, let $\hat{\mathbb{P}}=\lr{\hat{P}:P\in\mathbb{P}}.$
    \item By checking conditions 1, 2, and 3 of Definition \ref{def_ast}, let $\mathbb{A}\subseteq \hat{\mathbb{P}}$ be the members of $\hat{\mathbb{P}}$ that are ASTs.
    \item Using the action of $S_\Omega$ described in \eqref{eq_iso}, determine the ASTs in $\mathbb{A}$ up to isomorphism by computing which ones belong to the same orbit. 
    \item Let $\mathbb{T}\subseteq \mathbb{A}$ be a transversal of the orbits in the previous step.
\end{enumerate}
Then $\mathbb{T}$ gives all ASTs of order $m$ on $\Omega$ (up to isomorphism) such that each nontrivial relation is invariant under the action of $G$.
\end{thm}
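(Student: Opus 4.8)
The plan is to show that $\mathbb{T}$ is exactly a transversal of the isomorphism classes of the ASTs we seek, by verifying two inclusions: every AST of order $m$ on $\Omega$ whose nontrivial relations are $G$-invariant is isomorphic to some member of $\mathbb{T}$, and conversely every member of $\mathbb{T}$ is such an AST. Since step 5 already takes $\mathbb{T}$ to be a transversal of the $S_\Omega$-orbits on $\mathbb{A}$, and since Definition~\ref{def_iso} together with the action \eqref{eq_iso} tells us that two ASTs are isomorphic precisely when they lie in the same $S_\Omega$-orbit, the heart of the matter reduces to proving that $\mathbb{A}$ contains (a representative of the isomorphism class of) \emph{every} AST with the desired $G$-invariance, and contains \emph{only} such ASTs.

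First I would establish the ``only'' direction, which is essentially immediate from the construction. By step~3, each $\hat P \in \mathbb{A}$ is an element of $\hat{\mathbb{P}}$ that passes conditions 1, 2, and 3 of Definition~\ref{def_ast}; condition~4 holds because the first four parts of $\hat P$ are exactly $R_0,R_1,R_2,R_3$ by the construction in step~2. Hence every element of $\mathbb{A}$ is a genuine order-$m$ AST. Moreover each nontrivial relation of $\hat P$ has the form $A_U=\bigcup_{O\in U} O$, a union of $G$-orbits on $\Omega^3 \setminus \bigcup_{i=0}^3 R_i$, and a union of $G$-invariant sets is $G$-invariant; so every nontrivial relation is invariant under $G$. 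Thus $\mathbb{T}\subseteq\mathbb{A}$ consists solely of ASTs of the required type.

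Next I would prove the ``every'' direction, which is where the main work lies. Let $X=\lr{S_i}_{i=0}^m$ be any order-$m$ AST on $\Omega$ whose nontrivial relations are $G$-invariant. By condition~4 of Definition~\ref{def_ast}, its trivial relations must coincide with $R_0,R_1,R_2,R_3$, so $X$ partitions $\Omega^3\setminus\bigcup_{i=0}^3 R_i$ into its $m-3$ nontrivial relations $S_4,\dots,S_m$. The key claim is that each such $S_i$ is a union of $G$-orbits on that set: indeed, $G$-invariance of $S_i$ means $g\cdot S_i = S_i$ for all $g\in G$, so $S_i$ is a union of orbits, and these orbits lie in $\mathbb{O}$ since $S_i$ avoids the trivial relations. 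Consequently the collection $\lr{S_4,\dots,S_m}$ induces a partition $P$ of $\mathbb{O}$ into $m-3$ blocks, giving $P\in\mathbb{P}$ and $\hat P = X$. Therefore $X$ itself already lies in $\hat{\mathbb{P}}$, and since $X$ is an AST it passes the checks of step~3, so $X\in\mathbb{A}$. Finally, because $\mathbb{T}$ is a transversal of the $S_\Omega$-orbits on $\mathbb{A}$, the orbit of $X$ meets $\mathbb{T}$, so $X$ is isomorphic to a unique member of $\mathbb{T}$.

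The main obstacle, and the step deserving the most care, is the bijection between the nontrivial relations of a $G$-invariant AST and the partitions $P\in\mathbb{P}$ of the orbit set $\mathbb{O}$. One must check that $G$-invariance genuinely forces each nontrivial relation to be an exact union of full orbits (no orbit can be split between two relations, since a relation containing one point of an orbit contains the whole orbit by invariance), and that distinct nontrivial relations, being disjoint, correspond to disjoint blocks of orbits that together exhaust $\mathbb{O}$. Once this correspondence $\lr{S_i}\leftrightarrow P$ is pinned down, together with the observation that it is inverse to the map $P\mapsto\hat P$ of step~2, the identification $X\in\mathbb{A}$ follows; the remaining orbit/transversal bookkeeping is then routine given \eqref{eq_iso} and Definition~\ref{def_iso}.
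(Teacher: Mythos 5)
Your proposal is correct and follows essentially the same route as the paper: both arguments hinge on the observation that a ternary relation is $G$-invariant if and only if it is a union of $G$-orbits, which makes $\hat{\mathbb{P}}$ a search space containing every AST of order $m$ with $G$-invariant nontrivial relations, after which the AST checks and the $S_\Omega$-orbit transversal finish the classification. Your write-up is somewhat more explicit than the paper's (separating the two inclusions and pinning down the correspondence $\lr{S_i}\leftrightarrow P$), but the underlying ideas are identical.
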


\begin{proof}
Any AST of order $m$ is a size $m+1$ partition of $\Omega\times \Omega \times \Omega$ whose first four relations are the trivial relations $R_0,R_1,R_2$, and $R_3$. Since the trivial relations are always included in any AST, the problem reduces to determining the size $m-3$ partitions of $\Omega \times \Omega \times \Omega \setminus \bigcup_{i=0}^3{R_i}$ whose members are invariant under $G$ and which serve as the nontrivial relations of some AST on $\Omega$. In order to find such partitions, we note that a ternary relation $R\subseteq \Omega \times \Omega \times \Omega$ is invariant under $G$ if and only if $
R$ is a union of orbits of $G$. 

The first step in the algorithm is to obtain the set $\mathbb{O}$ of orbits of $G$ and find the set $\mathbb{P}$ of all its size $m-3$ partitions. The second step in the algorithm is to establish a search space for partitions of $\Omega \times \Omega \times \Omega$ that contains all ASTs of size $m$ where each nontrivial relation is invariant under $G$. The set $\hat{\mathbb{P}}$ is our search space, as we have observed that any AST with the desired properties will have each of its nontrivial relations as unions of members of $\mathbb{O}$. The third step determines the subset $\mathbb{A}$ of ${\hat{\mathbb{P}}}$ that consists of the partitions of $\Omega\times \Omega \times \Omega$ that are ASTs with the desired properties. The fourth step then provides how many of the desired ASTs exist, up to isomorphism. Finally, by taking a representative from each isomorphism class, the fifth step yields all ASTs (up to isomorphism) of order $m$ on $\Omega$ such that each nontrivial relation is invariant under the action of $G$.
\end{proof}

\section{Applications to \texorpdfstring{ ${n=3,4,5}$}{n=3,4,5}}

In this section, we apply the results of Theorem \ref{thm_algo} on $n\in\lr{3,4,5}$ and suitable choices of $G$. This yields the unique AST over three vertices, the unique symmetric ASTs over $n\in \lr{4,5}$ vertices, the unique AST over four vertices with two nontrivial relations, and the unique nontrivial circulant AST over five vertices. The data was obtained through source code written for and executed by the computer algebra system GAP 4.11.1. The source code is located in the appendix.

The first example uses the action of the trivial group to yield the unique AST over three vertices. Necessarily, this AST is symmetric and circulant.
\begin{ex}\label{ex_trivial}
Let $\Omega=\lr{1,2,3}$, $G$ be the trivial group, and $m-3$ range over $\lr{1,\ldots,6=|\mathbb{O}|}$. Any ternary relation is invariant under the trivial group; hence, ranging over all possible partition sizes yields all ASTs over three vertices. There is only one such AST and its sole nontrivial relation is $\Omega \times \Omega \times \Omega \setminus\bigcup_{i=0}^3{R_i}$. \end{ex}

The second example uses the orbits of $S_3$ acting on $\Omega \times \Omega \times \Omega$ by coordinate permutation as building blocks for the nontrivial relations of ASTs. This yields the unique symmetric ASTs over $n\in \lr{4,5}$ vertices, each of which is also necessarily circulant.

\begin{ex}
Let $\Omega=\lr{1,\ldots,n}$ with $n\in \lr{4,5}$, $G=S_3$, and $m-3$ range over $\lr{1,\ldots,|\mathbb{O}|}$. The ternary relations invariant under coordinate permutation are those who are unions of orbits of $S_3$; hence, ranging over all possible partition sizes yields all symmetric ASTs over four or five vertices. There is only one such AST for $n=4$ and only one such AST for $n=5$. In each case, the AST has a lone nontrivial relation given by $\Omega \times \Omega \times \Omega \setminus\bigcup_{i=0}^3{R_i}$.
\end{ex}

The following example shows that the symmetric AST over four vertices obtained from the prior example is also the only circulant AST over four vertices.

\begin{ex}\label{ex_circ}
Let $\Omega=\lr{1,2,3,4}$, $G$ be the cyclic group generated by $(1,2,3,4)\in S_4$, and $m-3$ range over $\lr{1,\ldots,|\mathbb{O}|}$. Any ternary relation invariant under $G$ will be unions of orbits of $G$ in $\Omega \times \Omega \times \Omega$; hence, ranging over all possible partition sizes yields all circulant ASTs (with respect to $G$) over four vertices. In fact, if $H$ were any transitive cyclic subgroup of $S_4$ and $Y$ were any circulant AST with respect to $H$, then we may relabel the elements of $\Omega$ to obtain an isomorphic AST that is circulant with respect to $G$. Hence, our algorithm actually produces all circulant ASTs over four vertices, up to isomorphism. There is only one such AST and it has a lone nontrivial relation $\Omega \times \Omega \times \Omega \setminus\bigcup_{i=0}^3{R_i}$. 
\end{ex}

The next example again uses the trivial group but fixes $m$ to be 5. This yields the unique size 6 AST on four vertices. 

\begin{ex}
Let $\Omega=\lr{1,2,3,4}$, $G$ be the trivial group, and $m=5$. Reasoning as in Example \ref{ex_trivial}, the algorithm produces all order 5 ASTs over four vertices. We find that there is only one such AST. Its nontrivial relations $R_4$ and $R_5$ are given by the following.
\begin{align*}
    R_4 =& \Big\{ (  1, 2, 3 ), ( 1, 3, 4 ), ( 1, 4, 2 ), ( 2, 1, 4 ), ( 2, 3, 1 ), ( 2, 4, 3 ), \\ 
    & ( 3, 1, 2 ), ( 3, 2, 4 ),
      ( 3, 4, 1 ), ( 4, 1, 3 ), ( 4, 2, 1 ), ( 4, 3, 2 ) \Big\},\\
    R_5 =& 
  \Big\{ ( 1, 2, 4 ), ( 1, 3, 2 ), ( 1, 4, 3 ), ( 2, 1, 3 ), ( 2, 3, 4 ), ( 2, 4, 1 ), \\
  & ( 3, 1, 4 ),( 3, 2, 1 ),
      ( 3, 4, 2 ), ( 4, 1, 2 ), ( 4, 2, 3 ), ( 4, 3, 1 )\Big\}.
\end{align*}

 In fact, \cite{mesner_association_1990} says that the two-transitive group PSL(2,3) acting on the projective line $PG(1,3) $ yields an AST of size 6. In the same paper, an AST of size 6 is also obtained from the action of the two-transitive group $AGL(1,4)$ on the Galois field $GF(4)$. Hence, the unique AST of order 5 over four vertices is the AST induced by the action of $PSL(2,3)$ on $PG(1,3)$ and is isomorphic to the AST obtained from the action of $AGL(1,4)$ on $GF(4)$. 
\end{ex}

The last example uses the transitive action of the cyclic subgroup generated by the permutation $(1,2,3,4,5)$ to obtain two circulant ASTs over five vertices.

\begin{ex}
Let $\Omega=\lr{1,2,3,4,5}$, $G$ be the cyclic group generated by $(1,2,3,4,5)\in S_5$, and $m-3$ range over $\lr{1,\ldots,|\mathbb{O}|}$. Reasoning as in Example \ref{ex_circ}, our algorithm produces all circulant ASTs over five vertices. There are two such ASTs. The first is the AST whose sole nontrivial relation is $\Omega \times \Omega \times \Omega \setminus\bigcup_{i=0}^3{R_i}$, as obtained before. The remaining AST has size 7, with nontrivial relations $R_4, R_5$, and $R_6$ as given below.

\begin{align*}
    R_4 =& \Big\{ ( 1, 2, 3 ), ( 1, 3, 5 ), ( 1, 4, 2 ), ( 1, 5, 4 ), ( 2, 1, 5 ), ( 2, 3, 4 ), ( 2, 4, 1 ), ( 2, 5, 3 ),
  ( 3, 1, 4 ), ( 3, 2, 1 ), \\
  &( 3, 4, 5 ), ( 3, 5, 2 ), ( 4, 1, 3 ), ( 4, 2, 5 ), ( 4, 3, 2 ), ( 4, 5, 1 ),
  ( 5, 1, 2 ), ( 5, 2, 4 ), ( 5, 3, 1 ), ( 5, 4, 3 ) \Big\},\\
  R_5=& \Big\{ ( 1, 2, 4 ), ( 1, 3, 2 ), ( 1, 4, 5 ), ( 1, 5, 3 ), ( 2, 1, 4 ), ( 2, 3, 5 ), ( 2, 4, 3 ), ( 2, 5, 1 ),
  ( 3, 1, 2 ), ( 3, 2, 5 ), \\ & ( 3, 4, 1 ), ( 3, 5, 4 ), ( 4, 1, 5 ), ( 4, 2, 3 ), ( 4, 3, 1 ), ( 4, 5, 2 ),
  ( 5, 1, 3 ), ( 5, 2, 1 ), ( 5, 3, 4 ), ( 5, 4, 2 ) \Big\},\\
  R_6=&\Big\{ ( 1, 2, 5 ), ( 1, 3, 4 ), ( 1, 4, 3 ), ( 1, 5, 2 ), ( 2, 1, 3 ), ( 2, 3, 1 ), ( 2, 4, 5 ), ( 2, 5, 4 ),
  ( 3, 1, 5 ), ( 3, 2, 4 ), \\&( 3, 4, 2 ), ( 3, 5, 1 ), ( 4, 1, 2 ), ( 4, 2, 1 ), ( 4, 3, 5 ), ( 4, 5, 3 ),
  ( 5, 1, 4 ), ( 5, 2, 3 ), ( 5, 3, 2 ), ( 5, 4, 1 ) \Big\}.
\end{align*}

In fact, \cite{mesner_association_1990} says that the two-transitive action of the group $AGL(1,5)$ on the Galois field $GF(5)$ yields an AST of size 7. Since this AST is circulant, it follows that this must be the size 7 AST obtained above.
\end{ex}

\appendix
\section{Source codes}
This appendix provides the source codes for GAP 4.11.1 utilized for obtaining the classification results in the article.

\begin{enumerate}
\item This is the program that executes the algorithm provided in Theorem \ref{thm_algo}. The size of $\Omega$, the desired group action by $G$, and the order $m$ of the desired ASTs are set manually, as guided by the comments therein. The program is dependent upon the auxiliary programs in items 2, 3, 4, and 5 of this list. 

\begin{verbatim}
n:= #set the desired set size here#; 
set:=[1..n];
cart:=Cartesian(set,set,set); #yields cartesian product

#setting the trivial relations
R:=[];
R[1]:=[];
    for c in cart do
        if c[1]=c[2] and c[1]=c[3] then Append(R[1],[c]); fi;
    od;
R[1]:=Set(R[1]);
R[2]:=[];
    for c in cart do
        if not(c[1]=c[2]) and c[2]=c[3] then Append(R[2],[c]); fi;
    od;	
R[2]:=Set(R[2]);	
R[3]:=[];
    for c in cart do
        if not(c[1]=c[2]) and c[1]=c[3] then Append(R[3],[c]); fi;
    od;
R[3]:=Set(R[3]);
R[4]:=[];
    for c in cart do 
        if not(c[1]=c[3]) and c[1]=c[2] then Append(R[4],[c]); fi;
    od;
R[4]:=Set(R[4]);

for i in[1..4] do 
    SubtractSet(cart,R[i]);
od;	

#determining the orbits of the desired group action
symblocks:=#set desired action here#;
#e.g. Orbits(SymmetricGroup(3),cart,Permuted);
symblocks2:=[];
    for i in [1..Size(symblocks)] do 
        symblocks2[i]:=Set(symblocks[i]);
    od;
symblocks:=symblocks2;

#determining partitions
part:=PartitionsSet(symblocks,#set partition size here#);;

#determining orbits of partitions
distinct:=Orbits(SymmetricGroup(n),part,partact);;

#obtain a representative for each partition orbit
transversal:=[];
    for i in [1..Size(distinct)] do 
        Append(transversal,[distinct[i][1]]);
    od;

#obtain candidates for ASTs
unioned:=[];
    for i in [1..Size(transversal)] do
        unioned[i]:=[];
        for j in [1..Size(transversal[i])] do
            unioned[i][j]:=[];
            unioned[i][j][1]:=[];
            for k in [1..Size(transversal[i][j])] do
                unioned[i][j][1]:=UnionSet(unioned[i][j][1],
                transversal[i][j][k]); 
            od;
    od; od;

S:=[];
    for i in [1..Size(unioned)] do 
        S[i]:=ShallowCopy(R);
        for j in [1..Size(unioned[i])] do 
            Append(S[i],[unioned[i][j][1]]);
        od;
    od;

#Check for AST axioms
for i in [1..Size(S)] do 
    if
        (valencycheck(S[i],[1..n]))=true and 
        (regularitycheck(S[i],[1..n]))=true and
        (permclosedcheck(S[i],[1..n]))=true then
        Print(i); Print("\n"); fi;
od;    \end{verbatim}

\item This is an auxiliary program for determining the orbit of a partition of $\Omega \times \Omega \times \Omega$ under the action given by \eqref{eq_iso}.

\begin{verbatim}partact:= function(P,g) #P for a partition g for Sym elt 

local size, image, i,j;
size:=Size(P);
image:=[];

for i in [1..size] do
    image[i]:=[]; 
        for j in [1..Size(P[i])] do 
            image[i][j]:=Set(OnSetsTuples(P[i][j],g));
        od; 
image[i]:=Set(image[i]); od;

return Set(image);

end;;\end{verbatim}

    \item This is an auxiliary program for determining whether or not the first condition of Definition \ref{def_ast} is satisfied.
    \begin{verbatim}valencycheck:=function(R,S)
	
local truth, rel,countinit,s,x,y,z,countproper;
truth:=true;
	
for rel in R do 
    countinit:=0;
	
    for s in S do
        if ([S[1],S[2],s] in rel) then countinit:=countinit+1; fi;
    od;
    for x in S do 
    for y in S do
        countproper:=0;
        if not(x=y) then 
            for z in S do
                if [x,y,z] in rel then countproper:=countproper+1; fi;
            od;
            if not(countproper=countinit) then return false; fi;
        fi;
    od; od; 
od;
	
return truth;
	
end;\end{verbatim}

\item This is an auxiliary program for determining whether or not the second condition of Definition \ref{def_ast} is satisfied.

\begin{verbatim}regularitycheck := function(R,S)
	
local truth, index, i,j,k,l, a, b, countinit, countproper, z;
truth:=true;
index:=[1..Size(R)];
	
for i in index do 
for j in index do
for k in index do 
for l in index do 
    countinit:=0;
    a:=R[l][1];
        for z in S do 
            if ([z,a[2],a[3]] in R[i]) and ([a[1],z,a[3]] in R[j]) and
            ([a[1],a[2],z] in R[k]) then countinit:=countinit+1; fi;
        od;
    for b in R[l] do 
    countproper:=0;
        for z in S do 
           if ([z,b[2],b[3]] in R[i]) and ([b[1],z,b[3]] in R[j]) and
           ([b[1],b[2],z] in R[k]) then countproper:=countproper+1;fi;
        od;
    if not(countproper =countinit) then 
    return false; fi;
    od;
od;	od; od; od; 
	
return truth;
	
end;;   \end{verbatim}

\item This is an auxiliary program for determining whether or not the third condition of Definition \ref{def_ast} is satisfied.
\begin{verbatim}permclosedcheck:= function(R,S)
	
local sym, truth, s, rel, relcopy, i;
sym:=SymmetricGroup(3);
truth:=true;

for s in sym do 
    for rel in R do 
        relcopy:=[];
        for i in [1..Size(rel)] do
            relcopy[i]:=Permuted(rel[i],s);
        od;	
        if not(Set(relcopy) in R) then return false; fi;
    od;
od;
	
return truth;
	
end;;   \end{verbatim}

\end{enumerate}

%\printbibliography
\bibliographystyle{amsplain}
\bibliography{AST}

\end{document}